\documentclass[12pt, a4paper,  reqno, english]{amsart}
\usepackage[all]{xy}
\usepackage[margin=0.9in]{geometry} 
\usepackage{amssymb,amsmath,amsthm}
\numberwithin{equation}{section}
\usepackage{lineno}

\newcommand{\Z}{\mathbb{Z}}

\newtheorem{proposition}{Proposition}
\newtheorem{lemma}{Lemma}
\newtheorem{corollary}{Corollary}
\newtheorem{conj}{Conjecture}

\newtheorem{example}{Example}

\theoremstyle{definition}

\renewcommand{\mod}[1]{\hspace{-2.9mm}\pmod{#1}}

\newcommand{\ben}{\begin{enumerate}}
\newcommand{\een}{\end{enumerate}}
\newcommand{\eit}{\begin{itemize}}
\newcommand{\beq}{\begin{equation}}
\newcommand{\eeq}{\end{equation}}


\renewcommand{\leq}{\leqslant}
\renewcommand{\geq}{\geqslant}

\usepackage{color}
\definecolor{red}{rgb}{1,0,0}
\definecolor{blue}{rgb}{.2,.6,.75}
\definecolor{green}{rgb}{.4,.7,.4}

\begin{document}

\title{Two dimensional covering systems and possible prime producing $a^m-b^n$} 
\author{Andrew Granville and Francesco Pappalardi}
\thanks{Many thanks to Keqin Liu as well as the referee for several helpful remarks and observations.}
 
\begin{abstract}  We exhibit a new application of two dimensional covering systems,  examples of integer pairs $(a,b)$ for which $a^m-b^n$ has a prime divisor from a given finite set of primes, for every pair of integers $m,n\geq 0$.    This leads us to conjecture what are \emph{the only possible} obstructions  to $|a^m-b^n|$ taking on infinitely many distinct prime values.
\end{abstract}
\maketitle

\section{Introduction}
 
 We begin by noting that 
 \[
 41^m-34^n
 \] 
 is divisible by 3,5 or 7  (that is $(41^m-34^n,3\cdot 5\cdot 7)>1$) for all integers $m,n\geq 0$:
 \[
 41^m-34^n\equiv
 \begin{cases}
 (-1)^m-1 \qquad \ \pmod 3 &\text{ so divisible by 3 if } m\equiv 0 \pmod 2;\\
\qquad \  1- (-1)^n \pmod 5 &\text{ so divisible by 5 if } n\equiv 0 \pmod 2;\\
   (-1)^m-(-1)^n \pmod 7 &\text{ so divisible by 7 if } m\equiv n\pmod 2.
 \end{cases}
 \]
Therefore $|41^m-34^n|$ is either composite or it equals  3,5 or 7. 
\footnote{There is another way to  establish that $ 41^m-34^n$ is composite when $m\equiv n\equiv 0\pmod 2$: we can write
 $m=2M, n=2N$ so that $41^m-34^n=(41^M-34^N)(41^M+34^N)$. }

 Siegel's $S$-unit theorem \cite{Sg2} states that there are only finitely many solutions to $a+b=c$ in coprime positive integers $a,b,c$ whose prime factors all come from a finite set $S$. Taking $S=\{ 2,3,5,7,17,41\}$ we deduce that there are only finitely many pairs $(m,n)$ for which $|41^m-34^n|$ is a power of $3,5$ or $7$. In particular $|41^m-34^n|$ can be prime for only finitely many pairs of integers $(m,n)$.\footnote{But $|41^m-34^n|$ can take prime values,   like $41^1-34^1=7$.}
 The same argument shows that for any given integers $a$ and $b$, if there exists an integer $Q$ such that 
\begin{equation} \label{eq: gcdQ}
(a^m-b^n , Q)>1 \text{ for all  positive integers } m,n ,
\end{equation}
then $|a^m-b^n|$ can be a prime or a prime power for only finitely many pairs of integers $(m,n)$.\footnote{One might want to make a stronger conjecture; for example, that if  \eqref{eq: gcdQ}  holds then $|a^m-b^n|$ can be a prime or prime power for no more than two pairs of integers $(m,n)$.}

Perhaps this is the only obstruction to there being prime values of $a^m-b^n$?
If so this leads us to make the following conjecture: 

\begin{conj}
For any given integers $a,b \geq 2$,  such that neither  $a$ nor $b$ is the power of an  integer,\footnote{ That is, there does not exist integers $A$ and $k\geq 2$ for which $a=A^k$, nor  integers $B$ and $\ell\geq 2$ for which $b=B^\ell$. We revisit this situation in Section \ref{sec: powers2}.}
 There are infinitely many primes of the form
 \[
 |a^m-b^n| \text{ where } m,n \text{ are positive integers},
 \]
 unless there exists a non-zero integer $Q$ for which \eqref{eq: gcdQ} holds.
 \end{conj}

We will show how to efficiently construct \emph{all}   triples $(a,b,Q)$ satisfying \eqref{eq: gcdQ}.\footnote{Though we do not have an efficient way to determine whether such a $Q$ exists for given integers $a$ and $b$.}
If there is no  obstruction as in \eqref{eq: gcdQ} then
 we conjecture that there is a constant $c_{a,b}>0$ such that 
 \begin{equation} \label{eq: PrimeCount}
  \#\{ |a^m-b^n|\leq x: |a^m-b^n| \text{ is prime}\} \sim  c_{a,b} \log x,
 \end{equation}
which we back with some computational evidence.  The constant $c_{a,b}$ is defined in the text as a limit of a sequence of positive constants; we cannot prove that the limit exists, nor that if the limit exists then it is non-zero, but we believe that the limit does exist and that it is non-zero.
 
 We also believe that the set of pairs of integers $(a,b)$ for which there is  an integer $Q$ satisfying \eqref{eq: gcdQ}  has a positive density strictly smaller than $1$.

 \subsection{A mixed case} \label{sec: powers2}
  The case $a=51, b=64$ (so that $b$ is a square) is a little different: Here
 \begin{align*}
 51^m-64^n&\equiv 
 \begin{cases}
  \qquad \ 1- (-1)^n \pmod 5 &\text{ so is divisible by 5 \ if }   n\equiv 0\ \pmod 2;\\
   (-1)^m-(-1)^n \pmod {13} &\text{ so is divisible by 13 if } m\equiv n\pmod 2;
 \end{cases} \\
 &=51^{m} - 8^{2n} \text{  is divisible by } 51^{m/2} + 8^n \text{ if } m\equiv 0\pmod 2.
 \end{align*}
Arguing as before with the $S$-unit theorem,\footnote{The $S$-unit theorem also implies that $51^{m/2} - 8^n=\pm 1$ for only finitely many pairs of integers $(m,n)$.}
 we deduce that $|51^m-64^n|$ has at least two distinct prime factors for all but finitely many pairs of positive integers $(m,n)$.

 We can generalize conjecture 1 in the cases that $a$ or $b$ is a  power:

\begin{conj}
For any given integers $a,b \geq 2$,  select $k$ and $\ell$  maximal
with $a=A^k, b=B^\ell$. There are infinitely many primes of the form
 \[
 |a^m-b^n| \text{ where } m,n \text{ are positive integers},
 \]
 unless there exists a non-zero integer $Q$ for which 
 \begin{equation} \label{eq: gcdQ2}
 (a^m-b^n , Q)>1 \text{ for all  positive integers } m,n \text{ with } (m,\ell)=(n,k)=1.
  \end{equation}
\end{conj}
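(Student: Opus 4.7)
The plan is to address the two implications of the conjecture separately: the reverse direction, that existence of such a $Q$ forces finiteness of prime values $|a^m-b^n|$, should be within reach via an elementary factoring reduction combined with Baker's theorem on linear forms in logarithms; the forward direction, that absence of any $Q$ yields infinitely many primes, is the essential obstacle and lies well beyond currently available techniques.

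For the reverse direction, the first step is to reduce to pairs $(m,n)$ with $(m,\ell)=(n,k)=1$. If a prime $p$ divides $(m,\ell)$, then $p\mid\ell$ combined with $b=B^\ell$ gives $b^n=(B^{\ell n/p})^p$, while $p\mid m$ gives $a^m=(a^{m/p})^p$. Writing $X=a^{m/p}$ and $Y=B^{\ell n/p}$, this yields
\[
a^m-b^n=(X-Y)\bigl(X^{p-1}+X^{p-2}Y+\cdots+Y^{p-1}\bigr),
\]
whose cofactor is a sum of $p$ positive integer terms and so is at least $p\ge 2$. The first factor fails to exceed $1$ in absolute value only when $X=Y$ (forcing $a^m=b^n$, not prime) or $|X-Y|=1$, a Catalan-type equation admitting only finitely many solutions $(m,n)$ by linear forms in two logarithms. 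A symmetric argument using $a=A^k$ handles $p\mid(n,k)$. Up to a finite exceptional set, then, every prime value of $|a^m-b^n|$ comes from a pair with $(m,\ell)=(n,k)=1$; on such pairs the hypothesis supplies a prime $p\mid Q$ dividing $a^m-b^n$, so primality forces $|a^m-b^n|=p$. There are only finitely many primes dividing $Q$ and, for each, only finitely many solutions of $|a^m-b^n|=p$ (bounding $|m\log a - n\log b|$ via linear forms in logarithms), and the finiteness follows.

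The forward direction is the main obstacle. The sequence $\{|a^m-b^n|\}$ is exponentially sparse, containing only $O((\log x)^2)$ terms up to $x$, and proving infinitude of primes along such a sequence is of comparable difficulty to the open problem of the infinitude of Mersenne or Fermat primes; no current sieve, circle method, or zero-density argument is known to succeed here. The most one can offer is a Cram\'er-type probabilistic heuristic: assuming that $a^m-b^n$ behaves like a random integer of its size subject only to the local constraints that rule out a $Q$, one predicts that the prime values up to $x$ number about $c_{a,b}\log x$, with $c_{a,b}$ a positive constant assembled from local densities as indicated in the introduction. Establishing $c_{a,b}>0$ as a theorem, let alone infinitude, would require ideas well beyond the scope of this paper, so the forward direction should be read as a genuine conjecture, supported by heuristics and by the numerical data gathered later in the text.
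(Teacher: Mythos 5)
Your proposal matches the approach implicit in the paper, which never assembles the ``in which case we can deduce\dots'' direction into a formal proof but scatters the two ingredients across Section~1.1 (the factoring reduction to $(m,\ell)=(n,k)=1$: for a prime $p\mid k$ and $p\mid n$, $A^{mk/p}-b^{n/p}$ divides $a^m-b^n$, with the analogous statement for $\ell$ and $m$) and Section~3 (Baker's bound $|a^m-b^n|\gg b^n(m+n)^{-C}$ showing that each $|a^m-b^n|=p_i$ has only finitely many solutions). You present the same reduction from the other side, via $a^m-b^n=X^p-Y^p$ with $X=a^{m/p}$, $Y=B^{\ell n/p}$, and you are somewhat more careful than the paper in observing explicitly that the cofactor $X^{p-1}+\cdots+Y^{p-1}\ge p$ and in separately disposing of the degenerate cases $X=Y$ and $|X-Y|=1$ (the latter finite by linear forms in logarithms). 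Your characterization of the converse direction as the genuinely conjectural content, supported only by the Cram\'er-type heuristic and numerics, also agrees with the paper's stance.

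Two small remarks. First, after your reduction it is worth saying one sentence confirming that $X,Y\ge 2$ (which follows since $a\ge 2$, $B\ge 2$, and the exponents $m/p$, $\ell n/p$ are positive integers), so that the cofactor is genuinely $\ge p$; you assert this but the justification is elided. Second, the paper elsewhere invokes Mih\u ailescu's theorem for the related equation $a^{m/g}-b^{n/g}=\pm 1$, but in your setting the exponent on $a$ may be $1$, so Catalan's theorem does not directly apply and your choice of Baker's linear-forms bound is the correct tool; you might flag that distinction rather than calling it merely ``Catalan-type.''
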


 If \eqref{eq: gcdQ2} holds then we can deduce that there are only finitely many primes of the form $|a^m-b^n|$.


\subsection{Remarks}  \label{sec: Rmks}
(i)\ Since $(a,b)$ divides $a^m-b^n$ for all integers $m,n\ge1$,  we can take $Q=(a,b)$ if $(a,b)>1$. 
We therefore assume henceforth that $(a,b)=1$.  This implies that $(ab,a^m-b^n)=1$ so we may assume, without loss of generality that 
$(Q,ab)=1$.

(ii)  Now  $a^m-b^n\equiv 1^m-1^n=0 \pmod {(a-1,b-1)}$ so we can take $Q=(a-1,b-1)$ if $(a-1,b-1)>1$.
 
 (iii)\ Extending the question to $m,n\ge0$ would add the sequences $a^m-1$ and $b^n-1$. The elements of these sequences are divisible by $a-1$ and $b-1$ respectively and therefore composite for $m,n\ge2$, respectively, unless $a=2$ or $b=2$. However primes of the form $2^m-1$ are Mersenne primes, a well-studied topic so we ignore it.
 
 (iv)\ We have that $(a^m-b^n , Q)>1$ for all  $m,n\geq 1$ if and only if $(a^m-b^n , Q)>1$ for all coprime $m,n\geq 1$. To see this note that if 
 $(m,n)=g$ then write $m=Mg,n=Ng$ so $(M,N)=1$. Then $a^M-b^N$ divides $(a^M)^g-(b^N)^g=a^m-b^n$ so that 
  $(a^M-b^N,Q)$ divides $(a^m-b^n , Q)$, and therefore $(a^m-b^n , Q)\geq (a^M-b^N,Q)>1$.
  
  (v) Suppose that $b=a\pm 1$. If $m\equiv n\equiv 1 \pmod {\phi(Q)}$ for any given integer $Q$, then  $a^m-b^n\equiv a-b=\mp 1 \pmod Q$ and therefore $(a^m-b^n,Q)=1$. This implies that there can be no integer $Q$ satisfying \eqref{eq: gcdQ}.
 
   (vi) How about $a^m+b^n$?  If $a+b$ is even then $a^m+b^n$ is even, and so composite,  for all $m,n\geq 1$.
If $a+b$ is odd then Keqin Liu (in email corrrespondence) notes that  if  $m\equiv n\equiv 0 \pmod {\phi(Q)}$ for any given integer $Q$, then  $a^m+b^n\equiv 1+1\equiv 2 \pmod Q$ as $(Q,ab)=1$ and therefore $(a^m+b^n,Q)=(a^m+b^n,2,Q)=1$. This implies that there can be no integer $Q$ for which  $(a^m+b^n,Q)>1$ for all $m,n\geq 1$.

 \section{Two-dimensional covering systems}

 A set of integer triples $\{ (u_i,v_i,r_i): i=1,\dots,k\}$ is a \emph{two-dimensional covering system} if for every pair of integers $(m,n)$ there exists $i$ for which
 \[
 mv_i\equiv nu_i \pmod {r_i}.
 \]
 For example, $\{(1,0,2), (0,1,2), (1,1,2)\}$ is a two-dimensional covering system.
 
  In this section we show that every   triple of positive integers $\{a,b,Q\}$, where $Q$ is squarefree and $(a,b)=1$ for which
$ (a^m-b^n , Q)>1$ for all $m,n\geq 1$, can be obtained from a two-dimensional covering system. And, vice-versa, given a two-dimensional covering system we can find all of the corresponding triples $\{a,b,Q\}$.

\subsection{Two-dimensional congruences}
For given integers $u,v,r$ with $r\geq 1$ we define
\[
S(u,v,r)=\{ (m,n)\in \Z^2: mv\equiv nu \pmod r\}.
\]
We call $(u,v,r)$ and $(U,V,R)$ \emph{equivalent} if $S(u,v,r)=S(U,V,R)$.
We call $(u,v,r)$ \emph{semi-reduced} if $u$ divides $r$, $(v,u)=1$   {and $1\leq v\leq r$.
If $(u,v,r)$ is semi-reduced then $S(u,v,r)$ is precisely the sublattice of $\Z^2$ generated by the two vectors $(0,r/u)$ and $(u,v)$.
(To prove this we observe that $u$ must divide $m$ so writing $m=uk$ we obtain $n\equiv vk\pmod {r/u}$.)
For example, $S(5,1,15)=\langle  (0,3), (5,1) \rangle =\langle  (0,3), (5,4) \rangle =S(5,4,15)$, so semi-reduced sets are not necessarily distinct.

In the next lemma we show that every $(u,v,r)$ is equivalent to a semi-reduced triple; and that
 reduced semi-reduced triples form finite equivalence classes, but not necessarily of size one.





}


\begin{lemma}  \label{lem: 1} (a) Every triple $(u,v,r)$ with $r\geq 1$  is equivalent to a semi-reduced triple. \\
(b)   If  $(u,v,r)$ and $(U,V,R)$ are semi-reduced then $S(u,v,r)=S(U,V,R)$ if and only if $u=U, r=R$ and $V\equiv v  \pmod {r/u}$ with $(V,u)=1$.
  \end{lemma}
 
 \begin{proof}  (a):  We begin by replacing $u$ and $v$ by their least positive residues $\pmod r$.

 We may assume that gcd$(u,v,r)=1$ for if $g=$gcd$(u,v,r)$ and $u=gU, v=gV, r=gR$ then $mv\equiv nu \pmod r$ if and only if $mV\equiv nU \pmod R$ and so $S(u,v,r)=S(U,V,R)$.

 We now show that we may assume that $u$ divides $r$: If not, let
  $U=(u,r)$ and write $u=Uh, r=UR$ with $(h,R)=1$. Now $(U,v)=(u,v,r)=1$ and so if $mv\equiv nu \pmod r$ then $U$ divides $mv$ and so $U$ divides $m$. Writing $m=MU$, we have $mv\equiv nu \pmod r$ if and only if 
 $Mv\equiv n h \pmod { R}$  which holds  if and only if  $n \equiv M vk \pmod { R}$ where
$k$ is the inverse of $h \pmod R$. 
Select $V$ to be a positive integer $\leq   RU=r$ such that $V\equiv vk \pmod R$ and $(V,U)=1$. Therefore $n \equiv M vk \pmod { R}$ if and only if $n \equiv MV \pmod { R}$ and then, multiplying through by $U,$ this holds if and only if $nU \equiv m V \pmod r$.  Thus $S(u,v,r)=S(U,V,r)$ and  $ U=(u,r)$ divides $r$ with $(U,V)=1$ and  $1\leq V\leq r$.  

We now deduce that $(u,v)=(u,v,r)=1$ as $(u,r)=u$.  Moreover given any  triple $(u,v,r)$ for which $u$ divides $r$ and  $(u,v)=1$, we replace 
$v$ by $V$ the smallest positive residue of $v \pmod r$ and we obtain a semi-reduced triple $(u,V,r)$ with $S(u,v,r)=S(u,V,r)$.
\medskip

(b): We are assuming that $mv\equiv nu \pmod r$ if and only if $mV\equiv nU \pmod R$.
Now $u$ divides $r$ so $u$ divides $mv$ and therefore $m$ as $(u,v)=1$. Therefore $(m,n)=(u,v)$ gives the smallest $m$-value in a solution, and so $u=U$. More generally if $m=u$ then we see that the arithmetic progressions $v  \pmod {r/u}$ and $V \pmod {R/u}$
contain the same integers so $r/u=R/U$ and thus $r=R$ and $V\equiv v \pmod {r/u}$.
   \end{proof}

 The covering system $\{(1,2,2), (2,1,2), (1,1,2)\}$ is made out of semi-reduced triples and is equivalent to the one in the above example.
 
 A semi-reduced triple $(u,v,r)$ is said to be \emph{reduced} if $v$ is the least positive integer among those integers $w$ such that $(u,w,r)$ is semi-reduced and equivalent to $(u,v,r)$.
 

 \begin{proposition} \label{Prop from Lemma 1} Every triple $(u,v,r)$ with $r\geq 1$ and $u$ and $v$ not both $0 \pmod r$, is equivalent to a unique reduced triple.
 \end{proposition}  

 \begin{proof}  
By Lemma \ref{lem: 1}(a) $(u,v,r)$ is equivalent to a semi-reduced triple, and by Lemma \ref{lem: 1}(b) this semi-reduced triple is equivalent to a unique reduced triple.
 \end{proof}

\begin{proposition} \label{Prop Generators} Let $p$ be a prime that does not divide the integers $a$ and $b$. Let $r=\text{ord}_p(a), s=\text{ord}_p(b), L=[r,s]$  and  $u=\frac r{(r,s)}=\frac Ls$.\\
(a) There exists a residue $g\pmod p$ with $\text{ord}_p(g)=L$ and an integer $v$
such that $a\equiv g^{v} \pmod p$ and $b\equiv g^{u} \pmod p$ where $(u,v,L)$ is a semi-reduced triple.\\
(b) There exists a unique residue $g\pmod p$ with $\text{ord}_p(g)=L$  and a unique integer $v$
such that $a\equiv g^{v} \pmod p$ and $b\equiv g^{u} \pmod p$ where $(u,v,L)$ is a reduced triple.\\
We denote these derived values as $g_p(a,b), u_p(a,b), v_p(a,b), L_p(a,b)$.
\end{proposition} 

\begin{proof}  (a):  We begin by observing that there exists a residue $g \pmod p$ of order $L$ for which $b\equiv g^u\pmod p$:   Since the residues $\pmod p$ form a cyclic group of order $p-1$ we know that $r$ and $s$, being orders of residues, must divide $p-1$, and so $L$ divides $p-1$.
But then the residues of order dividing $L$ form a cyclic subgroup generated say by $h$ which is of order $L$. Therefore   there exists an integer $k$ for which $b\equiv h^k \pmod p$.  But $b$ has order $s$ and so 
$(k,L)=\frac{L}s$ which implies there exists an integer $i$, coprime with $s$, for which $k=\frac{L}s i$.  Next we select an integer $I$ which is $\equiv i \pmod s$ and coprime with $L$ which is easily done using the Chinese Remainder Theorem, so that
$k\equiv \frac{L}s I \pmod {L}$. Now let $g\equiv h^I \pmod p$ and then 
$b\equiv h^k \equiv h^{ \frac{L}s I}\equiv g^{\frac{L}s} \pmod p$ as claimed. Moreover $g$ has order $L$ as $(I,L)=1$.

There exists an integer $v$ for which $a\equiv g^v$ and $(v,L)=\frac{L}r$ as $a$ has order $r \pmod p$. Therefore $v=\frac{L}r j$ where $(j,r)=1$, and we select $1\leq j\leq r$.  We deduce that $(u,v)=(\frac{L}rj, \frac{L}s) =( j\frac s{(r,s)},\frac r{(r,s)}) =1  $ as $(j,r)=1$,
and   $0<v= \frac{L}r j\leq L$ as $0<j\leq r$, and therefore  $(u,v,L)$ is semi-reduced.

(b):\ Suppose that in (a) we have  $h\pmod p$ with $\text{ord}_p(h)=L$ and an integer $w$
such that $a\equiv h^w  \pmod p$ and $b\equiv h^{u} \pmod p$ where $(u,w,L)$ is a semi-reduced triple.

 By   Lemma \ref{lem: 1}(b) we know that $(u,w,L)$  is equivalent to a unique reduced triple $(u,v,L)$ where
$v\equiv w \pmod{L/u}$ and $(v,u)=1$.  We deduce that there exists a unique $k \pmod L$ with $w\equiv kv \pmod L$ where $k\equiv 1  \pmod{L/u}$ and $(k,u)=1$. Now let $g\equiv h^k \pmod p$ so that  $g^u\equiv h^{ku} \equiv h^u\equiv b \pmod p$ as 
$ku\equiv u \pmod L$ and
$a\equiv h^w\equiv h^{kv}\equiv g^v  \pmod p$.
\end{proof}

This proof can be modified  to show that if there exists  $h\pmod p$ with $\text{ord}_p(h)=L$ and an integer $w$
such that $a\equiv h^w  \pmod p$ and $b\equiv h^{u} \pmod p$ then $(u,w,L)$ is a semi-reduced triple which is equivalent to the reduced triple $(u,v,L)$ in Proposition \ref{Prop Generators}(b).

 We can immediately deduce how two-dimensional congruences can be used to describe pairs $(m,n), m,n\ge0$ for which $a^m-b^n$ is divisible by a fixed prime $p$:

 \begin{proposition} \label{Prop 1} Let $p$ be a prime that does not divide the integers $a$ and $b$.
There exists a unique reduced triple $(u,v,L)$ for which 
\[
  \{ (m,n)\in  (\mathbb Z_{\ge0})^2:\ a^m\equiv b^n \pmod p\} = S(u,v,L)\cap (\mathbb Z_{\ge0})^2.
  \]
Here $L=[r,s]$ where   $r=\text{ord}_p(a), s=\text{ord}_p(b)$  with  $u=\frac Ls$ and $(v,L)=\frac Lr$.
\end{proposition}  

\begin{proof} By Proposition \ref{Prop Generators}(b) there exists a unique residue $g\pmod p$ with $\text{ord}_p(g)=L$  such that $a\equiv g^{v} \pmod p$ and $b\equiv g^{u} \pmod p$ where $(u,v,L)$ is a    reduced triple for some unique integer $v$ (and therefore the reduced triple is unique).  Therefore $a^m\equiv b^n \pmod p$ if and only if $g^{mv}\equiv g^{nu} \pmod p$, which holds if and only if
$mv\equiv nu \pmod L$, as $\text{ord}_p(g)=L$.  The result follows.
\end{proof}

  \subsection{Primes yielding a given semi-reduced triple}
For a given triple $(p,a,b)$ where $p$ is prime and $a, b$ are positive integers not divisible by $p$, let
$$
(u_p(a,b), v_p(a,b), L_p(a,b))
$$ 
be the reduced triple given by Proposition~\ref{Prop 1}. 
For any given reduced triple $(u,v,L)$ we let 
\[
\mathcal P(u,v,L):=\bigg\{ (p,a,b) :\ (u_p(a,b), v_p(a,b), L_p(a,b))= (u,v,L)\bigg\}.
\]
We immediately deduce the following classification from Proposition~\ref{Prop 1} and Proposition \ref{Prop Generators}(b):

  \begin{lemma} \label{lem: create} Suppose that $(u,v,L)$ is a reduced triple. Then
 $(p,a,b)\in \mathcal P(u,v,L)$ if and only if $p\equiv 1 \pmod L$ and there exists a residue $g \pmod p$ of order $L$
 with  $a\equiv g^v \pmod p$ and $b\equiv g^u \pmod p$.
\end{lemma}

\subsection{Two-dimensional covering systems}
 A set of  integer triples $\{ (u_i,v_i,r_i): i=1,\dots,k\}$ is a  two-dimensional covering system if
 \[
 \bigcup_{i=1}^k S(u_i,v_i,r_i) = \mathbb Z \times \mathbb Z.
 \]
The covering system is \emph{minimal} if no proper subset covers all of  $ \mathbb Z_{\geq 1}\times \mathbb Z_{\geq 1}$. A covering system is \emph{semi-reduced} if it is made up of semi-reduced triples   (which is equivalent to covering $\Z^2$ by the sublattices 
$\langle  (0,r_i/u_i), (u_i,v_i)\rangle_{\Z}, i=1,\dots,k$).
A covering system is \emph{reduced} if it is made up of reduced triples; note that since any triple is equivalent to a unique reduced triple, therefore any covering system is equivalent to a reduced covering system.

  Two dimensional covering systems have a rich history \cite{CM, JM, PS, Sc, Si}  and very recently \cite{CK} (which is formulated in terms of sublattices covering $\Z^2$)
 but not in our context.
 
Our main tool is given by the following result:
     
 \begin{corollary} Let $Q$ be an integer coprime to $ab$. Then
  \[
  \{ (m,n): (a^m-b^n,Q)>1\} \supset  \mathbb Z_{\geq 0}\times \mathbb Z_{\geq 0}
  \]
  if and only if  $\{ S(u_p(a,b), v_p(a,b), L_p(a,b)): \text{Prime } p \text{ divides } Q\}$ is a two-dimensional  covering system.
 Moreover $Q$ is minimal (in that no proper divisor has a common factor with $a^m-b^n$ for all $m,n\geq 1$) if and only if the covering system is minimal.
 \end{corollary}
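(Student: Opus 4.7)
The plan is to derive the corollary directly from Proposition~\ref{Prop 1} together with a short periodicity argument. Since every $p_i$ is coprime to $ab$, the condition $(a^m-b^n,Q)>1$ is equivalent to the existence of some $i$ with $p_i\mid a^m-b^n$, which in turn is equivalent to $a^m\equiv b^n\pmod{p_i}$. I would start by writing
\[
\{(m,n)\in\mathbb Z_{\geq 0}^2:\ (a^m-b^n,Q)>1\}=\bigcup_{i=1}^k\{(m,n)\in\mathbb Z_{\geq 0}^2:\ a^m\equiv b^n\pmod{p_i}\},
\]
and then invoke Proposition~\ref{Prop 1} to rewrite each set on the right as $S(m_i,n_i,r_i)\cap\mathbb Z_{\geq 0}^2$. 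The first half of the corollary will then reduce to showing that $\bigcup_i S(m_i,n_i,r_i)\supset\mathbb Z_{\geq 0}^2$ is equivalent to $\bigcup_i S(m_i,n_i,r_i)=\mathbb Z\times\mathbb Z$.

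The only non-trivial direction of this equivalence is that covering the positive quadrant forces covering the whole plane, which I would handle by a periodicity observation: each $S(m_i,n_i,r_i)$ is invariant under translation by $r_i\mathbb Z^2$, so the union is invariant under translation by $R\mathbb Z^2$, where $R=[r_1,\dots,r_k]$. Since every coset of $R\mathbb Z^2$ in $\mathbb Z^2$ meets $\mathbb Z_{\geq 0}^2$, any union containing the positive quadrant automatically contains the entire plane. Combining this with the display above yields the first claim.

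The minimality statement will follow by the same rewriting: removing the prime $p_i$ from $Q$ corresponds to removing the triple $(m_i,n_i,r_i)$ from the family, and the divisor $Q/p_i$ shares a common factor with every $a^m-b^n$ precisely when the sub-family $\{(m_j,n_j,r_j):j\neq i\}$ still covers $\mathbb Z_{\geq 0}^2$, which, by the same periodicity argument, is equivalent to still covering $\mathbb Z\times\mathbb Z$. The only delicate point in the whole argument is this periodicity step bridging the positive quadrant and the full lattice; everything else is a direct unwinding of the definitions.
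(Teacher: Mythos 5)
Your proof is correct and follows exactly the route the paper intends: decompose $\{(m,n):(a^m-b^n,Q)>1\}$ over the prime factors of $Q$, invoke Proposition~\ref{Prop 1} for each $p_i$, and assemble; the paper itself states the corollary as an immediate consequence (``Piecing these together we deduce the following''). Your periodicity observation, that each $S(m_i,n_i,r_i)$ is $r_i\mathbb Z^2$-periodic so covering the nonnegative quadrant is equivalent to covering all of $\mathbb Z^2$, is the one step the paper leaves tacit, and you supply it cleanly.
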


\begin{proof}
Now
 \[
 \{ (m,n): (a^m-b^n,Q)>1\}  = \bigcup_{p|Q} \{ (m,n): a^m\equiv b^n \pmod {p}\}.
 \]
(It is convenient, here and throughout, to define $(a^m-b^n,Q)$, when $m$ and $n$ might be negative integers, to equal the greatest common divisor of $Q$ and the numerator of $a^m-b^n$.)

Proposition \ref{Prop 1} implies that there is a reduced integer triple $(u_p(a,b), v_p(a,b), L_p(a,b))$
 for which 
 \[
  \{ (m,n): a^m\equiv b^n \pmod {p_i}\} = S(u_p(a,b), v_p(a,b), L_p(a,b)).
  \]
The result follows.
\end{proof}

\subsection{Constructing $a,b$ and $Q$ for a given two-dimensional covering system}
   
   On the other hand, suppose that we are given a reduced two-dimensional covering system $\{ (u_i,v_i,L_i): i=1,\dots,k\}$. We can construct
   distinct primes, $p_1,\dots,p_k$ and residues $a_i, b_i \pmod {p_i}$ with each $(p_i,a_i,b_i)\in \mathcal P (u_i,v_i,L_i)$ using Lemma  \ref{lem: create}; that is, select any prime $p_i\equiv 1 \pmod{L_i}$, any $g_i  \pmod{p_i}$ of order $L_i$, and then let
$a_i\equiv g_i^{v_i} \pmod {p_i}$ and $b_i\equiv g_i^{u_i} \pmod {p_i}$. Now
 let $Q=p_1\cdots p_k$ and determine $a,b \pmod Q$ for which  $a\equiv a_i \pmod {p_i}$ and  $b\equiv b_i \pmod {p_i}$ for each $i$, using the Chinese Remainder Theorem. Therefore
 \begin{align*}
 \{ (m,n): (a^m-b^n,Q)>1\}  &= \bigcup_{i=1}^k \{ (m,n): a^m\equiv b^n \pmod {p_i}\}  \\ 
&= \bigcup_{i=1}^k \{ (m,n): a_i^m\equiv b_i^n \pmod {p_i}\}  \\ 
&= \bigcup_{i=1}^k S(u_i,v_i,L_i)=\Z \times \Z \supset \Z_{\geq 0} \times \Z_{\geq 0} .
  \end{align*}

Given $(u_i,v_i,L_i)$ and a choice of prime $p_i$ there are $\phi(L_i)$ choices of the $g_i$ and then the $a_i$ and $b_i$ are determined, and so $a \pmod Q$ and $b\pmod Q$.  There are therefore  in total
   $\prod_i \phi(L_i)$ pairs of residues classes $a,b \pmod Q$ where  the $p_i$ are distinct primes $\equiv 1 \pmod {L_i}$.

\begin{example} The  two-dimensional reduced covering system $\{ (1,2,2), (2,1,2), (1,1,2)\}$ gives the following:
If $p,q,r$ are distinct odd primes for which 
 \[
 p|(a+1,b-1),\ q|(a-1,b+1),\ r|(a+1,b+1)
 \]
then $(a^m-b^n,pqr)>1$ for all integers $m,n\geq 0$.
\end{example}

\begin{example} The reduced two-dimensional covering system  $\{ (1,3,3), (3,1,3), (1,1,3), (1,2,3)\}$ yields, for distinct  primes $p_1,p_2,p_3,p_4$ all $>3$, if
\[
p_1|(a-1,b^2+b+1), p_2|(b-1,a^2+a+1), p_3|(b-a,a^2+a+1),  p_4|(b-a^2,a^2+a+1)
\]
then $(a^m-b^n,p_1p_2p_3p_4)>1$ for all integers $m,n\geq 0$.\smallskip

For instance if $p_1=7, p_2=13, p_3=19, p_4=31$ and $a, b$ satisfy:
$$\begin{cases}a\equiv 1\bmod 7 & \text{so that }7\mid a-1\\
a\equiv 3\bmod 13& \text{so that }13\mid a^2-a+1\\
a\equiv 7\bmod 19& \text{so that }19\mid a^2+a+1\\
a\equiv 5\bmod 31 & \text{so that }31\mid a^2+a+1,\\
\end{cases}
\quad
\begin{cases}b\equiv 2\bmod 7 & \text{so that }7\mid b^2-b+1\\
b\equiv 1\bmod 13& \text{so that }13\mid b-1\\
b\equiv 7\bmod 19& \text{so that }19\mid b-a\\
b\equiv 25\bmod 31 & \text{so that }31\mid b-a^2,\\
\end{cases}$$
we obtain:

\centerline{$(15226^m-67419^n,7\cdot 13\cdot 19\cdot 31)>1$ for all $m,n\geq 0$.}

\noindent We chose $b=67419$ rather then $b=13820$ (which both solve the system of congruences above) to avoid an even value of $b$.  
\end{example}

\begin{example} The \emph{trivial} reduced covering system $\{(1,1,1)\}$
corresponds to primes $p\mid (a-1,b-1)$ so that $p\mid a^m-b^n$ for all $m,n\ge0$.
Therefore, otherwise, we can restrict attention to pairs of integers $(a,b)$ for which  $(a-1,b-1)=1$, as well as $(a,b)=1$ (from section \ref{sec: Rmks}, remarks (i) and (ii)), so  one of $a$ and $b$ is odd, the other even.
\end{example}

\subsection{Properties of non trivial reduced covering systems}
All non-trivial semi-reduced covering systems take the form  $\{ (u_i,v_i,L_i): i=1,\dots,k\}$ with each $L_i>1$.
Since each $p_i\equiv 1 \pmod {L_i}$ according to the construction of Lemma \ref{lem: create}, we must have each $p_i\geq 3$ and so  $Q$ must be odd.

For $m=1,n=L_1\cdots L_k$ we must have some $i$ with $u_i=L_i$, and so $a\equiv 1 \pmod {p_i}$. Hence
$(a-1,Q)>1$   and $(b-1,Q)>1$ analogously, and so $a-1$ and $b-1$ must both have odd prime factors.

This last deduction implies that there is no covering system if one of $a$ or $b$ is either $2$, or is of the form $2^\ell+1$ for some $\ell\geq 1$. These   remarks lead us to make the following conjecture:

\subsection*{Prime values in the exceptional cases}
 \emph{Let $a=2$ or $a=2^\ell+1$ for some integer $\ell\geq 1$, and suppose that
 $b$ is a positive integer which is coprime with $a$ and of opposite parity to $a$. Then  there are infinitely many primes of the form $|b^n-a^m|$ as $m$ and $n$ vary over the  integers $\geq 1$.}

\section{Heuristic for the number of primes $a^m-b^n$}

We first estimate $\#\{ (m,n): |a^m-b^n|\leq x\}$ and use the Cram\'er heuristic to guess at the number of prime values. We then
adjust this with local probabilities to guess at the number of primes.

\subsection{Linear forms in logarithms}
.  We will use Baker's theorem \cite{Bak} in the following form to show that the above has finitely many solutions: There exists a number $C=C(a,b)$ such that if $m,n\geq 1$ then
\[
|m\log a-n\log b|\geq (m+n)^{-C}
 \]
 Taking exponentials of both sides and multiplying through by $b^n$ we get
 \begin{equation} \label{eq: LB}
    |a^m-b^n| \gg b^n(m+n)^{-C}
 \end{equation}
 and this goes to $\infty$ as $a^m,b^n\to \infty$. 
 Thus  $|a^m - b^n |$ equals any given value for only finitely many positive pairs $(m,n)$.

\subsection{The number up to $x$}
We wish to count the number of $m,n\geq 1$ with $0<a^m-b^n\leq x$. 
If $m\leq \frac{\log x}{\log a}$  and $n\leq \frac{\log a^m}{\log b}$ then $b^n<a^m\leq x$, and the number of such pairs is
\[
\sum_{m\leq \frac{\log x}{\log a}} \bigg\lfloor \frac{\log a^m}{\log b} \bigg\rfloor = 
\sum_{m\leq \frac{\log x}{\log a}} \bigg( m \frac{\log a}{\log b}+O(1) \bigg) = \frac{(\log x)^2}{2\log a \log b} +O\bigg( \frac{\log x}{\log a}+\frac{\log x}{\log b} \bigg) .
\]
Now we consider $m> \frac{\log x}{\log a}$ and let $n_m$ be maximal with $b^{n_m}<a^m$.
  Equation \eqref{eq: LB} yields that 
  \[  x\geq a^m-b^n \gg a^mm^{-C} \]
  and so $m\leq \frac{\log x}{\log a}+O( C \log\log x)$ and therefore there are $\ll C \log\log x$ such pairs   
  $(m,n_m)$. If $n\leq n_m-1$ and 
\[
x\geq a^m-b^n\geq a^m-b^{n_m}/b>a^m(1-1/b),
\]
so $a^m\leq \frac b{b-1} x$ and therefore $m\leq \frac{\log x+O(1/b)}{\log a}$. This yields at most one   value of $m$, and so the number of such pairs $(m,n)$ is $\ll \frac{\log x}{\log b}$ (which can be attained if $a^m=x+1$).

Adding these estimates, together with the analogous argument for when $0<b^n-a^m\leq x$ we obtain
\[
\#\{ m,n\geq 1: |a^m-b^n|\leq x\} = \frac{(\log x)^2}{\log a \log b} +O\bigg( \frac{\log x}{\log a}+\frac{\log x}{\log b} \bigg) .
\]

\subsection{The Cram\'er heuristic} A randomly chosen integer near $x$ is prime with probability about $\frac 1{\log x}$. If a set of integers is chosen more or less randomly then a first guess at the number of primes in the set can be obtained by applying this heuristic.

Now for each $n\leq n_m$ we have the ``probability'' that $a^m-b^n$ is prime is about
\[
\frac 1{\log (a^m-b^n)} = \frac 1{\log (a^mm^{O(1)})} \sim  \frac 1{\log a^m}
\]
while the number of such $n$ is $\frac{\log a^m}{\log b}+O(1)$. So the``expected'  number of primes in
$\{ a^m-b^n: 1\leq n\leq \frac{\log a^m}{\log b}\}$ is $\sim \frac{\log a^m}{\log b}\cdot  \frac 1{\log a^m}\sim \frac 1{\log b}$.
Summing this up over all $m$ with $a^m\leq x$ (or $a^m\leq \frac b{b-1} x$) we expect $\sim \frac{\log x}{\log a \log b}$ primes. 

Combining what we get here with the analogous argument for $a^m<b^n$ we obtain the guess:
\[
\#\{ m,n\geq 1: |a^m-b^n| \text{ is prime and } a^m, b^n\leq x\} \sim  \frac{2\log x}{\log a \log b}.
\]
We will need to adjust the constant to take account of divisibility by small primes, but still we believe that if $x=a^y$ then the number of primes should be linear in $y$. We test this next.

\subsection{Linearity}

Suppose that $1<a<b$ and that there is no covering system for $a^m-b^n$. Let
\[
\pi_{a,b}(y):=\# \{  m,n\geq 1: |a^m-b^n| \text{ is prime and } a^m, b^n\leq a^y\} .
\]
Since $a^m-b^n$ is coprime to $ab$ we can most simply adjust the above guess by multiplying through by $\frac a{\phi(a)} \frac b{\phi(b)} $ to obtain the new guess
\[
\#\{ m,n\geq 1: |a^m-b^n| \text{ is prime and } a^m, b^n\leq x\} \sim  \frac{2ab \log x}{\phi(ab) \log a \log b}.
\]

We define $N_k=\pi_{a,b}(100k)-\pi_{a,b}(100(k-1))$ for $k\geq 1$. Our heuristic suggests  that these numbers should each be roughly 
\[
G_1(a,b):= \frac{200ab }{\phi(ab) \log b}
\]
 our ``first guess'' for the $N_k$-values, which we test in the next table:
\begin{table}[ht]
\begin{center}
\begin{tabular}{|  c | c c c c c c c | c |}
\hline\newline
 $a,b$  & $k=$1 & 2 & 3 & 4 & 5 & 6 & 7 & $G_1(a,b)$\\
\hline\newline
2, 3 & 417 & 411 & 459 & 433 & 409 & 438 & 446 & 546 \\
3, 4 & 294 & 299 & 284 & 297 & 290 & 283 & 263&  433 \\
2, 5 & 249 & 271 & 244 & 234 & 244 & 245 & 275 & 311\\
4, 5 & 293 & 245 & 278 & 290 & 253 & 294 & 269 & 311\\
5, 6 & 271 & 282 & 253 & 283 & 306 & 282 & 261 & 419\\
2, 7& 175 & 171 & 185 & 148 & 202 & 172 & 160& 240\\
\hline 
\end{tabular}
    \end{center} \smallskip
    \caption{$N_k$-values for various pairs $(a,b)$ with $b>a>1$, and our ``first guess'', $G_1(a,b)$.}
    \end{table}

\noindent The data on each row seems roughly constant, and so persuades us that the $\pi_{a,b}(y)$ are indeed approximately linear.
However our ``first guess'' is consistently too large, so
we next try to adjust our guess to get a more accurate fit with the data.


\subsection{Local adjustments, and special form adjustment}
It is usual, when  guesstimating the number of primes in a given set of integers, to adjust one's guesstimate depending on how the set of integers is distributed mod $p$ for each prime $p$. If the set is the set of values of a polynomial, then the distributions mod $p$ are independent for different $p$, since the values are periodic mod $p$ with period $p$.
However this is not so in our case. For example we see that $3$ divides $2^n-1$ if and only if $n\equiv 0 \pmod 2$ and 
$5$ divides $2^n-1$ if and only if $n\equiv 0 \pmod 4$, so that $15$ divides $2^n-1$ if and only if $n\equiv 0 \pmod 4$. 

Instead of working mod $p$, we need to work modulo a sequence of composite integers $Q_1,Q_2,\dots$ such that every prime $p$ divides  $Q_j$ for all $j\geq j_p$.  One idea is to have  $Q_j$ be the product of the $j$ smallest primes (which is essentially the usual choice), but  we saw in \cite{GG} that other choices might be more natural.

Potentially there is a second complicating issue. If $g:=(m,n)>1$ then $a^{m/g}-b^{n/g}$ divides $a^m-b^n$ and so $a^m-b^n$ is composite unless
$a^{m/g}-b^{n/g}=\pm 1$. Mihailescu's theorem \cite{Mi} (which was Catalan's conjecture) implies that either
$\{ a^{m/g}, b^{n/g}\}=\{ 3^2,2^3\}$ or $m/g=1$ or $n/g=1$.
If $n=g$ then $n$ divides $m$, say $m=nk$ where $b=a^k\mp 1$;
if $m=g$ then $m$ divides $n$, say $n=m\ell$ where $a=b^\ell\pm 1$.
However in all three cases this plays a role for $O(N)$  $(m,n)$-pairs with $m,n\leq N$, whereas there are $\asymp N^2$ pairs, so these cases effect a vanishing proportion of pairs $(m,n)$, so can be ignored.

The prime factors of $ab$ never divide $a^m-b^n$ with $m,n\geq 1$ but all other primes do for some $m,n$ values (since $p|a^{p-1}-b^{p-1}$).
Let $P_i$ be the $i$th smallest prime power,
\[
r_k=[P_1,\cdots,P_k] \text{ and } q_k=(a^{r_k}-1, b^{r_k}-1).
\]
The proportion of $a^m-b^n$ values that have no factor in common with $q_k$ is given by
\[
\frac 1{x^2} \#\{ m,n\leq x: (a^m-b^n,q_k)=1\} \sim \frac 1{r_k^2} \#\{ m,n\leq r_k: (a^m-b^n,q_k)=1\} .
\]
We wish to incorporate the criterion $(m,n)=1$.   This fits best in our approach if we work instead with $(m,n,r_k)=1$ since that will find all common factors of any given $m$ and $n$ once $k$ is sufficiently large, and the criteria is now also  $r_k$-periodic. Therefore
the proportion of $a^m-b^n$ values that have no factor in common with $q_k$ and with $(m,n)$ coprime with $r_k$   is given by
\[
\frac 1{x^2} \#\bigg\{ m,n\leq x: {(a^m-b^n,q_k)=1\above 0pt \& \  (m,n,r_k)=1}\bigg\} \sim
\frac 1{r_k^2} \#\bigg\{ m,n\leq r_k: {(a^m-b^n,q_k)=1\above 0pt \& \  (m,n,r_k)=1}\bigg\} 
\]
As $k\to \infty$ this incorporates divisibility by small primes as well as $(m,n)=1$, as desired.
For regular integers, the analogous probability is $\phi(q_k)/q_k$, and so the adjustment to the Cram\'er heuristic, taking into account divisibility by the small primes, is 
\[
 \kappa_{a,b}(k) :=\frac {q_k}{\phi(q_k)} \cdot \frac 1{r_k^2} \#\bigg\{ m,n\leq r_k: {(a^m-b^n,q_k)=1\above 0pt \& \  (m,n,r_k)=1}\bigg\} \]
 hopefully becoming more accurate as $k$ increases.
Indeed we believe that 
\[
\lim_{k\to \infty} \kappa_{a,b}(k) \text{ exists, and equals a non-zero constant } \kappa_{a,b},
\]
and therefore we conjecture that 
 \[
 \boxed{ \#\{ |a^m-b^n|\leq x: |a^m-b^n| \text{ is prime}\} \sim  \frac{2ab\, \kappa_{a,b}}{\phi(ab)\log a\log b}\cdot \log x;}
 \]
and so, in the introduction, we have  $c_{a,b}:= \frac{2ab\, \kappa_{a,b}}{\phi(ab)\log a\log b}$.

\subsection{Convergence of the constant}
In practice we cannot easily calculate past $r_7=2520$, and we have no idea how to prove that the $\kappa_{a,b}(k)$ converge   to a positive value. Here is some data of $\kappa_{a,b}(k)$-values:

\begin{table}[ht]
\begin{center}
\begin{tabular}{|  c | c c c c c c  | }
\hline\newline
 $a,b$  & $k=$2& 3 & 4 & 5 & 6 &  7 \\
\hline\newline
2, 3 &  .713 & .746 & .747 & .740 & .749 & .777   \\
3, 4 &   .702 & .746 & .665 & .683 & .692 & .705  \\
2, 5 &  .681 & .737 & .709 & .696 & .699 & .725  \\
4, 5 & .778 & .843 & .739 & .715 & .718 & .721   \\
5, 6 &  .670 & .689 & .698 & .679 & .678 & .666     \\
2, 7&  .667 & .621 & .636 & .659 & .650 & .705  \\
\hline 
\end{tabular}
    \end{center} \smallskip
     \caption{$\kappa_{a,b}(k)$-values for increasing $k$, appear to converge}
    \end{table}

\noindent    The entries in the rows of Table 2 are not varying too much and perhaps converging,   but it is  rather scant evidence.

We developed the $\kappa_{a,b}(k)$ sequence to test our conjecture for prime values of $a^m-b^n$. Therefore it is best to now 
 compare the mean value of the $N_k$'s from the first table with our ``second guess'',
\[
G_2(a,b):=\frac{200ab\, \kappa_{a,b}(7)}{\phi(ab) \log b} 
\]

 \begin{table}[ht]
\begin{center}
\begin{tabular}{|  c | c | c   | }
\hline\newline
 $a,b$  & Mean $\#$ of primes &  $G_2(a,b)$ \\
\hline\newline
 2, 3 &  430 & 424   \\
3, 4 &   287  & 305  \\
2, 5 &  252 & 225  \\
4, 5 & 275 & 224   \\
5, 6 &  277 &  279     \\
2, 7&  173 & 169  \\
\hline 
\end{tabular}
    \end{center} \smallskip
      \caption{Mean of $N_k$-values for various   $b>a>1$, and our ``second guess'', $G_2(a,b)$.}
    \end{table}                           
 This looks fairly persuasive, but it would be good to collect more evidence.                    
   
\section{Calculations reveal}  
We have claimed that if there is no compelling reason why not, then there are lots of prime values of $|a^m-b^n|$.
In this section we discuss calculations designed to determine how well our predictions are reflected in actual data.
Given the fast growth of $a^m$ and $b^n$ as functions of $m$ and $n$, we can only compute a fairly limited amount of data for each pair $a$ and $b$, but nonetheless, what we can compute has given us confirmation that our predictions seem right.

We restrict our attention to pairs $(a,b)$ for which it is feasible, at first sight, that there are lots of prime values of $|a^m-b^n|$.
Let $\mathcal N$ be the set of integer pairs $(a,b)$ where $b>a\geq 2$ with $(a,b)=(a-1,b-1)=1$ and $a$ are $b$ are not both perfect $p$th powers for some prime $p$ (in which case if $a=A^p, b=B^p$ then $a^m-b^n$ is divisible by $A^m-B^n$). We have seen that if 
$(a,b)\not\in \mathcal N$ then it is easy to show that $a^m-b^n$ takes only finitely many prime values. We have not excluded all $(a,b)$-pairs that correspond to a covering system, only the simplest that are easy to identify immediately. We might expect other pairs with more complicated covering systems will ``self-identify'' by yielding few prime values.

Given integers $(a,b)\in \mathcal N$ we select $x_{a,b}$ so that 
\[
\frac{ab \log x_{a,b}}{\phi(ab) \log a \log b}=100.
\] 
We will calculate $\#\{ \text{Primes }  |a^m-b^n|: a^m,b^n\leq x_{a,b}\}$; our prediction suggests that this is
$ \approx  200 \kappa_{a,b}$ primes.  Now let $M_{a,b}:=\frac{\phi(ab)}{ab}  \log b, N_{a,b}:=\frac{\phi(ab)}{ab}  \log a$ so that 
$a^m,b^n\leq x_{a,b}$ if and only if $m  \leq \ 100 M_{a,b}$ and $n\leq 100N_{a,b}$, and therefore
we will calculate
\[
\Pi_{a,b}(100):=\# \{ m\leq 100 M_{a,b}, n\leq 100N_{a,b}: |a^m-b^n| \text{ is prime }\}.
\]

We took the viewpoint that if $\Pi_{a,b}(100)\geq 25$ then there are probably infinitely many primes of the form
$|a^m-b^n|$, and we should investigate further when $\Pi_{a,b}(100)< 25$. In fact there are only five such pairs with $b\leq 100$:

\begin{table}[ht]
\begin{center}
\begin{tabular}{| c| c |  c   | }
\hline\newline
 $a$ & $b$  &   $\Pi_{a,b}(100)$ \\
\hline\newline
 9 & 74 &  20   \\
29 & 34 &   1  \\
34 & 41 &  1  \\
51 & 64 & 1   \\
59 & 86   & 0 \\
\hline 
\end{tabular}
    \end{center} \smallskip
      \caption{Pairs $(a,b)\in \mathcal N$ with $b\leq 100$ and $\Pi_{a,b}(100)< 25$.}
    \end{table}                           

In the first example we found $\Pi_{9,74}(100)=20, \Pi_{9,74}(200)=43, \Pi_{9,74}(300)=62,\dots$ which looks very much like linear growth, going to infinity. We found no additional primes in the other four examples and then looked for  covering systems. For three of them we found
\[ 7|(29-1, 34+1), 3|(29+1, 34-1),5|(29+1, 34+1) \]
\[ 3 |(34-1, 41+1),  5 |(34+1, 41-1),  7 |(34+1, 41+1)\]
\[  29 |(59-1, 86+1), 5 |(59+1, 86-1), 3|(59+1, 86+1)\]
which are each as in example 1. 

 The case $a=51, b=64$ is exactly the case discussed in Section \ref{sec: powers2}.
 
To compute further we define $ \mathcal N^+$ to exclude more possible covering systems:
 Let $ \mathcal N^+$ be the set of integer pairs $(a,b)$ where $b>a\geq 2$ with $(a,b)=(a-1,b-1)=1$ for which neither $a$ nor $b$ are perfect powers, and at least one of $(a-1,b+1)=1, (a+1,b-1)=1, (a+1,b+1)=1$ holds (else we have a covering system). The only exceptional cases with $a+b\leq 500$ and $(a,b)\in \mathcal N^+$ are

\begin{table}[ht]
\begin{center}
\begin{tabular}{| c| c |  c   | }
\hline\newline
 $a$ & $b$  &   $\Pi_{a,b}(100)$ \\
\hline\newline
26 &  149 & 19 \\
68 &  133 & 21 \\   
67 &  186 & 21 \\   
13 &302 &0 \\   
37 &284 &24 \\   
22 &321 &24 \\   
13 &356 & 0 \\
128 & 253 & 12 \\   
43 & 342 & 23 \\   
122 & 307 & 0 \\   
191 & 254 & 20 \\  
 202 & 251 & 20 \\   
 161 & 304 & 5 \\   
 146 & 323 & 23\\
  \hline 
\end{tabular}
    \end{center} \smallskip
      \caption{Pairs $(a,b)\in \mathcal N^+$ with $a+b\leq 500$ and $\Pi_{a,b}(100)< 25$.}
    \end{table}               
    
In the three cases where we get no primes we found covering systems:\\
Both $(a,b)=(13, 302)$ and $(a,b)=(122,307)$ correspond to the semi-reduced covering system: \\
$\{(1,2,2),(2,1,2),(1,1,4),(1,3,4)\}$ so that
 \[
 13^m-302^n\equiv
 \begin{cases}
   0 \pmod 7 &\text{  if } m\equiv 0 \pmod 2;\\
 0 \pmod {3} &\text{  if } n\equiv 0\pmod 2;\\
 0 \pmod {17} &\text{  if } m\equiv n\pmod 4;\\
  0 \pmod {5} &\text{  if } m\equiv -n\pmod 4,
 \end{cases}
 \]
 and
 \[
 122^m-307^n\equiv
 \begin{cases}
   0 \pmod 3 &\text{  if } m\equiv 0 \pmod 2;\\
 0 \pmod {11} &\text{  if } n\equiv 0\pmod 2;\\
 0 \pmod {5} &\text{  if } m\equiv n\pmod 4;\\
  0 \pmod {13} &\text{  if } m\equiv -n\pmod 4;
 \end{cases}
 \]
  while  $(a,b)=(13,356)$ corresponds
to the semi-reduced covering system: \\
$\{(1,2,2),(1,1,2),(4,1,4),(2,1,4)\}$  so that
 \[
 13^m-356^n\equiv
 \begin{cases}
   0 \pmod 3 &\text{  if } n\equiv 0 \pmod 2;\\
 0 \pmod {7} &\text{  if } m\equiv n\pmod 2;\\
 0 \pmod {5} &\text{  if } m\equiv 0\pmod 4;\\
  0 \pmod {17} &\text{  if } m\equiv 2 \pmod 4 \text{ and } n \equiv 1\mod 2.
 \end{cases}
 \]
 In the case $161^m-304^n$ we computed further and found more primes, so we believe that in all the other examples we would get infinitely many primes. To test our quantitative conjecture we determined how many primes there are up to the point that our prediction claimed there would be 100 primes. Thus we calculated
 $P_{a,b}(100):=\#\{ \text{Primes }  |a^m-b^n|: a^m,b^n\leq X_{a,b}\}$ where $X_{a,b}$ is selected so that
 \[
  \frac{2ab\kappa_{a,b}(7)}{\phi(ab)\log a\log b}\cdot \log X_{a,b}=100,
 \]
 \begin{table}[ht]
\begin{center}
\begin{tabular}{|  c | c | c c  | }
\hline\newline
 $a,b$  &  $P_{a,b}(100)$  & $\kappa_{a,b}(7)$ & Prediction \\
\hline\newline
26, 149 &  115  & .085    &  100   \\
68,  133 &  86 & .134    &   100  \\
67,  186 &  86 &  .095   &   100   \\
 37, 284 &  94 &  .188   &   100   \\
 22, 321 & 189 & .164 &   100   \\
128, 253 &  102    & .070 & 100\\
 43, 342 & 77 & .163 & 100\\
191, 254 &   99   & .151 & 100\\
202, 251 &  104    & .082 & 100\\
161, 304 &  57    & .091 & 100\\
146, 323 &110 & .180 & 100\\
\hline 
\end{tabular}
    \end{center} \smallskip   
       \caption{$P_{a,b}(100)$ when primes of the form $|a^m-b^n|$ seem sparse}
    \end{table}  
    
The data mostly corresponds well with the prediction, though it would be good to better understand the outliers here.


 \bibliographystyle{plain}

\enddocument